\newtheorem{theorem}{Theorem}[section]
\newtheorem{proposition}[theorem]{Proposition}
\newtheorem{condition}[theorem]{Condition}
\theoremstyle{definition}
\newtheorem{definition}[theorem]{Definition}
\newtheorem{problem}[theorem]{Problem}
\newtheorem{example}[theorem]{Example}
\newtheorem{remark}[theorem]{Remark}
\numberwithin{equation}{section}
\begin{document}
\setcounter{page}{1}

\vspace*{2.0cm}
\title[feasibility-seeking with sets
that are unions of convex sets]
{An iterative process for the feasibility-seeking problem with sets
that are unions of convex sets}
\author[Y. Censor, A.J. Zaslavski]{ Yair Censor$^{1,*}$, Alexander J. Zaslavski$^2$}
\maketitle
\vspace*{-0.6cm}

\begin{center}
{\footnotesize

$^1$Department of Mathematics, University of Haifa, Mt. Carmel, Haifa 3498838, Israel\\
$^2$Department of Mathematics, The Technion-Israel Institute of Technology, Haifa 3200003, Israel\\
\bigskip

\textbf{This work is dedicated to the memory of Professor Jon Borwein}

}\end{center}

\vskip 4mm {\footnotesize \noindent {\bf Abstract.}
In this paper we deal with the feasibility-seeking problem for unions of
convex sets (UCS) sets and propose an iterative process for its solution.
Renewed interest in this problem stems from the fact that it was recently
discovered to serve as a modeling approach in fields of applications and from
the ongoing recent research efforts to handle non-convexity in feasibility-seeking.

 \noindent {\bf Keywords.}
 Feasibility-seeking; non-convex sets; unions of convex
sets; floorplanning; projections onto sets expandable in convex sets.

 \noindent {\bf 2020 Mathematics Subject Classification.}
90C30, 65K10. }

\renewcommand{\thefootnote}{}
\footnotetext{ $^*$Corresponding author.
\par
E-mail address: yair@math.haifa.ac.il (Y. Censor), ajzasl@technion.ac.il (A.J. Zaslavski).
\par
Received May, 17, 2024; Accepted April, 1, 2025.

\rightline {\tiny   \copyright  2022 Communications in Optimization Theory}}

\section{Introduction\label{sec:intro}}

The feasibility problem is to find a point in the nonempty intersection of a
finite family of sets. It is a modeling framework for many real-world problems
and problems in physics and mathematics. The literature about it in the convex
case, when all sets are convex, is huge and diverse, see, e.g., \cite{bb96},
\cite{Cegielski-book} and \cite{censor-cegielski-review}. The non-convex
situation is more difficult to address but has witnessed many works in recent
years, see, e.g., \cite{hesse-luke}, \cite{pong-li-2016}, \cite{dizon-2022},
\cite{bauschke-2014} and \cite{corvellec-2004}, to name but a few.

A particular kind of non-convexity that occurs when the sets for the
feasibility problem are unions of convex sets was studied by Chrétien and
Bondon \cite{chertien-bondon-1996}, \cite{cheritien-bondon-2020}. This was
recently discovered to serve as a modeling approach in the application field
of floorplanning with I/O (input/output) assignment \cite{Shan-Yu-2023},
bringing up again the topic of investigating iterative processes for it.
Recent work \cite{bauschke-noll-2014} discusses the Douglas--Rachford
algorithm to solve the feasibility problem for two closed sets that are finite
unions of convex sets.

Related, although indirectly, to this subject are the recent papers on
algorithms that are based on unions of nonexpansive maps, \cite{tam-2018},
\cite{dao-2019} and \cite{zaslavski-2023}.

In this paper we deal with the feasibility-seeking problem for unions of
convex sets (UCS) sets and propose an iterative process that is different from
earlier proposed ones. Further work is needed to assess the practicality of
this algorithm as well as that of the earlier proposed ones mentioned above.

We use the term \textquotedblleft Feasibility-Seeking
Problem\textquotedblright\ and not just \textquotedblleft Feasibility
Problem\textquotedblright\ because the latter sometimes refers to the problem
of deciding whether or not an intersection of sets is or is not feasible,
i.e., nonempty, see, e.g., \cite{Firouzeh-2022}. The term we use here more
accurately describes that one is interested in finding a point rather than
deciding feasibility or manipulating the problem to reach feasibility.

\section{Notions and notations\label{sec:preliminaries}}

Let $(\mathcal{H},\langle\cdot,\cdot\rangle)$ be a real Hilbert space equipped
with an inner product $\langle\cdot,\cdot\rangle$ which induces the Euclidean
norm $\Vert x\Vert=\langle x,x\rangle^{1/2}$, $x\in\mathcal{H}$. For each
$x\in\mathcal{H}$ and each $r>0$ the closed ball with radius $r$ centered at
$x$ is $B(x,r):=\{y\in\mathcal{H}\mid\;\Vert x-y\Vert\leq r\}.$

For each $x\in\mathcal{H}$ and each nonempty set $\Omega\subset\mathcal{H}$
the distance between $x$ and the set $\Omega$ is $dist(x,\Omega):=\inf\{\Vert
x-y\Vert\mid\;x\in\Omega\}.$

Let $\Omega\subset\mathcal{H}$ be a nonempty, convex and closed set. Then for
each $x\in\mathcal{H}$ there exists a unique point $P_{\Omega}(x)\in\Omega$,
called the \textbf{orthogonal projection of }$x$\textbf{ onto }$\Omega$, such
that
\begin{equation}
\Vert x-P_{\Omega}(x)\Vert=dist(x,\Omega). \label{eq:2.1}%
\end{equation}

\section{The feasibility-seeking problem for sets that are unions of convex
sets\label{sec:the-problem}}

Sets that are unions of convex sets are defined as follows.

\begin{definition}
\label{def:UCS}\textbf{(union of convex sets set). }Let $m_{\Omega}$ be a
natural number and let%
\begin{equation}
\Omega_{s}\subset\mathcal{H},\;s=1,2,\dots,m_{\Omega},
\end{equation}
be nonempty closed convex sets that are pairwise disjoint, i.e.,%
\begin{equation}
\Omega_{s}\cap\Omega_{t}=\emptyset,\text{ \ for all }s,t\in\{1,2,\dots
,m_{\Omega}\},\text{ }s\not =t. \label{eq:pairwise-disjoint}%
\end{equation}
The union of these sets%
\begin{equation}
\Omega:=\cup_{s=1}^{m_{\Omega}}\Omega_{s} \label{eq:USC-set}%
\end{equation}
will be called a \textquotedblleft\textbf{union of convex sets (UCS)
set}\textquotedblright.
\end{definition}

Such sets were called, under different conditions, \textquotedblleft
expandable in convex sets\textquotedblright\ in \cite[Definition
1]{chertien-bondon-1996} and in \cite[Definition 1]{cheritien-bondon-2020}. In
\cite[Subsection 6.1]{dao-2019} such sets were considered under the name
\textquotedblleft union convex sets\textquotedblright. We define, the possibly
non-unique, orthogonal projections onto a UCS set as follows.

\begin{definition}
\label{def:proj-on-ucs}Let $\Omega$ be a UCS set, $\Omega:=\cup_{s=1}%
^{m_{\Omega}}\Omega_{s}$. For each $x\in\mathcal{H}$ the standard definition
of projection onto $\Omega$ is%
\begin{equation}
P_{\Omega}(x):=\{y\in\Omega\mid\;\Vert x-y\Vert\leq\Vert z-x\Vert,\text{ \ for
all }z\in\Omega\}.
\end{equation}

\end{definition}

\begin{remark}
\label{remark:proj}Let $T\subset\{1,2,\dots,m_{\Omega}\}$ be the set of
indices of the sets $\Omega_{t}$ whose distances to $x$ are smallest, compared
to the distances of $x$ to the other sets $\Omega_{s}$. Then $P_{\Omega}(x)$
is the, obviously nonempty, set%
\begin{equation}
P_{\Omega}(x)=\{P_{\Omega_{t}}(x),\;t\in T\mid\Vert x-P_{\Omega_{t}}%
(x)\Vert\leq\Vert x-P_{\Omega_{s}}(x)\Vert,\;s=1,2,\dots,m_{\Omega}\}.
\end{equation}

\end{remark}

The feasibility-seeking problem for UCS sets is defined next.

\begin{problem}
\label{prob:problem}(\textbf{The} \textbf{feasibility-seeking problem for UCS
sets). }Let $m$ be a natural number and assume that, for each $i\in
\{1,2,\dots,m\}$, the set $C_{i}$ is a UCS set%
\begin{equation}
C_{i}:=\cup_{j=1}^{m_{i}}C_{i,j},\label{eq:USC-set copy(2)}%
\end{equation}
so that the integer $m_{i}$ is the number of sets whose union constitutes the
set $C_{i}.$ The feasibility-seeking problem for UCS sets is to find a point
$x^{\ast}$ in the intersection%
\begin{equation}
x^{\ast}\in C:=\cap_{i=1}^{m}C_{i}.\label{eq: (8)}%
\end{equation}

\end{problem}

Throughout this work we assume that the feasibility-seeking problem for UCS
sets is feasible, i.e., that the intersection is nonempty $C\not =\emptyset.$

\section{The iterative process\label{sec:iter-process}}

During the iterative process we will perform projections onto the sets $C_{i}%
$, which are themselves unions of sets. Such projections might not be unique.
For the definition of our algorithm\footnote{As common, we use the term
algorithm for the iterative process studied here although no termination
criteria, which are by definition necessary in an algorithm, are present and
only the asymptotic behavior is studied.} and for its convergence analysis we
use a condition that guarantees the uniqueness of the projections onto the UCS
sets involved in the problem. To this end we append Problem \ref{prob:problem}
with an additional UCS set which is a copy of $C_{1}.$ This clearly does not
change at all the problem. Thus, we define%
\begin{equation}
C_{m+1}=C_{1},\text{ \ }m_{m+1}=m_{1,}\text{ \ }C_{m+1,j}=C_{1,j}\text{ \ for
all }j\in\{1,2,\dots,m_{1}\}. \label{eq:additional-set}%
\end{equation}

\begin{condition}
\label{cond:condition}For each $i\in\{1,2,\dots,m\}$ and each $j\in
\{1,2,\dots,m_{i}\}$ there exists a unique integer%
\begin{equation}
\theta(i,j)\in\{1,2,\dots,m_{i+1}\}
\end{equation}
such that for every $x\in C_{i,j}$%
\begin{equation}
P_{C_{i+1}}(x)=P_{C_{i+1,\theta(i,j)}}(x), \label{eq:3.4}%
\end{equation}
and%
\begin{equation}
dist(x,C_{i+1,\theta(i,j)})<dist(x,C_{i+1,\ell}),\;\ell\in\{1,2,\dots
,m_{i+1}\}\setminus\{\theta(i,j)\}. \label{eq:3.5}%
\end{equation}

\end{condition}

Observe that the inequality (\ref{eq:3.5}) does not follow from the pairwise
disjointedness condition (\ref{eq:pairwise-disjoint}) in Definition
\ref{def:UCS} because the latter does not rule out that there could be two (or
more) sets in the union of sets to which they belong that will have equal
distances to a point $x.$ Condition \ref{cond:condition} plays a crucial role
in our study. Proposition \ref{prop:prop2.1}, presented in the sequel, shows
that it holds if for each $i\in\{1,2,\dots,m-1\}$ and each $x\in C_{i}$,
$P_{C_{i+1}}(x)$ is a singleton.

For all natural numbers $k$ define the cyclic \textquotedblleft control
sequence\textquotedblright\ $\{i(k)\}_{k=1}^{\infty}$ such that $i(k)\in
\{1,2,\dots,m\}$ for all $k$ and%
\begin{equation}
i(k):=(k-1)\operatorname{mod}m+1.
\end{equation}
This sequence serves to index the sets that are used by the algorithm which is now described.

\bigskip

\textbf{Algorithm 1. Projections onto Unions of Convex Sets (PUCS)
Algorithm}\\
(1) \textbf{Initialization}: Set $i=1$ and pick $C_{1}.$ For each
$r\in\{1,2,\dots,m_{1}\}$ pick an arbitrary initial point
\begin{equation}
y_{r}^{1}\in C_{1,r} \label{algeq:init}
\end{equation}
and define
\begin{equation}
\tau(r,1):=r. \label{algeq:parameter}
\end{equation}

(2) \textbf{First sweep}:

Set $k=1$ and as long as $k\leq m$ do:

Given an iteration vector $y_{r}^{k}$ calculate the next iteration vector for
it by
\begin{equation}
y_{r}^{k+1}=P_{C_{i(k+1)}}(y_{r}^{k}),
\end{equation}
and denote the index of the set in the UCS set $C_{i(k+1)}$ which is closest
to $y_{r}^{k}$ by $\tau(r,k+1)\in\{1,2,\dots,m_{i(k+1)}\},$ so that
\begin{equation}
y_{r}^{k+1}\in P_{C_{i(k+1),\tau(r,k+1)}}(y_{r}^{k}).
\end{equation}

After $m$ consecutive iterations of these iterative steps we have completed
the first sweep and we reach, for each $r\in\{1,2,\dots,m_{1}\},$
\begin{equation}
y_{r}^{m+1}=P_{C_{m+1}}(y_{r}^{m})=P_{C_{1}}(y_{r}^{m})\text{ \ and \ }
\tau(r,m+1).\text{ }
\end{equation}

Define
\begin{equation}
R:=\{r\mid1\leq r\leq m_{1}\text{ \ for which \ }\tau(r,m+1)\neq r\text{ \ has
occurred}\} \label{eq:the-set-R}
\end{equation}
and go to Step (3).

(3) \textbf{Iterative step}: For all $r\in\{1,2,\dots,m_{1}\}\setminus R$ set
$k=1$ and $y_{r}^{1}\leftarrow y_{r}^{m+1},$ obtained from Step (2), and do
for all $k\geq1$:

Given an iteration vector $y_{r}^{k}$ calculate the next iteration vector for
it by
\begin{equation}
y_{r}^{k+1}=P_{C_{i(k+1)}}(y_{r}^{k}),
\end{equation}
and denote the index of the set in the UCS set $C_{i(k+1)}$ which is closest
to $y_{r}^{k}$ by $\tau(r,k+1)\in\{1,2,\dots,m_{k+1}\},$ so that actually
\begin{equation}
y_{r}^{k+1}\in P_{C_{i(k+1),\tau(r,k+1)}}(y_{r}^{k}).
\end{equation}

Observe that (\ref{eq: (8)}) and Condition \ref{cond:condition} guarantee that
$R\neq\{1,2,\dots,m_{1}\}$ thus allowing the iterative step (3) in the
algorithm to proceed. We give now a verbal description that sheds some light
on the logic behind the algorithm. For the purpose of this discussion, let us
nickname the sets $C_{i}$ that constitute the feasibility-seeking problem as
\textquotedblleft the large sets\textquotedblright\ and nickname the sets
whose unions constitute large sets as \textquotedblleft inner
sets\textquotedblright.

The set $C$ must be an intersection of a family of inner sets that includes at
least one inner set from each large set $C_{i}.$ The multitude of inner sets
presents the algorithm with the, nontrivial, task of \textquotedblleft
discovering\textquotedblright\ those inner sets whose intersection is the set
$C.$ The algorithm is initialized by picking one large set, say $C_{1},$ and
picking a set of arbitrary points, each from one of its inner sets as in
(\ref{algeq:init}). This is done because it is not known, at the beginning,
which one of the inner sets of $C_{1}$ participates in the intersection $C.$
Integer parameters are set as in (\ref{algeq:parameter}).

From each of those initial points $y_{r}^{1}$ the algorithm preforms\ a full
sequential cycle (nicknamed \textquotedblleft a sweep\textquotedblright) of
successive projections onto the remaining large sets. For each projection the
parameter $\tau(r,k+1)$ (for every $r\in\{1,2,\dots,m_{1}\}$) indicates the
specific inner set that was projected on. The algorithm is parallel in the
sense that the latter activity, as well as others in it, can be performed
simultaneously on several processors.

At the end of this first sweep we have a set of \textquotedblleft
end-points\textquotedblright\ $y_{r}^{m+1}$ and the parameters $\tau(r,m+1)$
associated with each. The parameters tell the index of the inner set on which
the last projection of each sweep occurred. An end-point for which
$\tau(r,m+1)=r$ means that the orbit initiated by it returns to the inner set
$C_{1,r}$ from which it was initialized. These are the orbits that we wish to
follow, thus, the set $R$ is defined as in (\ref{eq:the-set-R}) and its
indices are deleted at the beginning of the iterative step (3). We need the
initial sweep to identify orbits on whose progress the algorithm
\textquotedblleft has no control\textquotedblright\ about which we can not say
where they are leading to.

The remaining orbits, for $r\in\{1,2,\dots,m_{1}\}\setminus R$ are being
retained and sequential cyclic successive projections are performed along
them. The convergence result in Theorem \ref{theorem:converge} shows that the
distance between each sequence $\{y_{r}^{k}\}_{k=1}^{\infty},$ generated by
the algorithm, and any large set converges to zero. In fact, the algorithm
solves, in parallel, some pruned convex feasibility-seeking problems via an
iterative sequential projection method.

The convergence of Algorithm 1 can now be established as
follows in the next theorem.

\begin{theorem}
\label{theorem:converge}Consider feasibility-seeking for UCS sets of Problem
\ref{prob:problem} with an additional UCS set as in (\ref{eq:additional-set}).
Assume that the problem is feasible and that Condition \ref{cond:condition}
holds. Let $r\in\{1,2,\dots,m_{1}\}$ be such that%
\begin{equation}
C\cap C_{1,r}\not =\emptyset\label{eq:22}%
\end{equation}
and let $\{y_{r}^{k}\}_{k=1}^{\infty}$ be a sequence generated by Algorithm 1. Then%
\begin{equation}
\lim_{k\rightarrow\infty}\Vert y_{r}^{k}-y_{r}^{k+1}\Vert=0\label{eq:lim-0}%
\end{equation}
and for each $i\in\{1,2,\dots,m\},$%
\begin{equation}
\lim_{k\rightarrow\infty}dist(y_{r}^{k},C_{i})=0.\label{eq:(24)}%
\end{equation}

\end{theorem}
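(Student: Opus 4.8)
The plan is to show that each retained orbit is nothing but the classical cyclic (sequential) projection iteration for a \emph{fixed} finite family of closed convex sets that have a common point, and then to read off (\ref{eq:lim-0}) and (\ref{eq:(24)}) from Fej\'er monotonicity alone, so that no infinite-dimensional convergence theorem is actually needed.

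First I would record the ``itinerary'' of inner sets along the orbit. From Condition \ref{cond:condition}: if at step $k$ one has $y_r^k\in C_{i(k),\tau(r,k)}$, then $y_r^{k+1}=P_{C_{i(k+1)}}(y_r^k)=P_{C_{i(k+1),\theta(i(k),\tau(r,k))}}(y_r^k)$, hence $\tau(r,k+1)=\theta(i(k),\tau(r,k))$, with $\tau(r,1)=r$ (throughout, $C_{m+1}=C_1$ as in (\ref{eq:additional-set})). Now pick $z\in C\cap C_{1,r}$, which is nonempty by (\ref{eq:22}); I claim $z\in C_{l,\tau(r,l)}$ for every $l\geq1$. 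This is clear for $l=1$, and if $z\in C_{l,\tau(r,l)}$ then, since $z\in C\subseteq C_{l+1}=\cup_{\ell}C_{l+1,\ell}$, we get $z\in C_{l+1,\ell_0}$ for some $\ell_0$, so $dist(z,C_{l+1,\ell_0})=0$; the \emph{strict} inequality (\ref{eq:3.5}), applied at $x=z\in C_{l,\tau(r,l)}$, then forces $\ell_0=\theta(l,\tau(r,l))=\tau(r,l+1)$, i.e. $z\in C_{l+1,\tau(r,l+1)}$. (This is the one place where strictness, rather than the mere pairwise disjointness (\ref{eq:pairwise-disjoint}), is essential.) In particular $z\in C_{1,r}\cap C_{1,\tau(r,m+1)}$, so the pairwise disjointness of the inner sets of $C_1$ gives $\tau(r,m+1)=r$; hence $r\notin R$, the iterative step (3) retains $r$, and, since the recursion for $\tau$ is driven by the $m$-periodic control $i(\cdot)$, the sequence $\{\tau(r,k)\}_{k\geq1}$ is $m$-periodic.

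Consequently, setting $E_l:=C_{l,\tau(r,l)}$ for $l\in\{1,\dots,m\}$ and $E_{m+1}:=E_1$, each $E_l$ is a closed convex subset of $C_l$, $z\in\bigcap_{l=1}^{m}E_l$, and $y_r^{k+1}=P_{E_{i(k+1)}}(y_r^k)$ for all $k$; that is, $\{y_r^k\}_{k=1}^{\infty}$ is exactly the cyclic projection sequence for the convex family $E_1,\dots,E_m$. Since $E_{i(k+1)}$ is closed convex and $z\in E_{i(k+1)}$, the standard projection inequality gives $\Vert y_r^k-z\Vert^2\geq\Vert y_r^{k+1}-z\Vert^2+\Vert y_r^k-y_r^{k+1}\Vert^2$; telescoping this over $k$ yields $\sum_{k\geq1}\Vert y_r^k-y_r^{k+1}\Vert^2\leq\Vert y_r^1-z\Vert^2<\infty$, which proves (\ref{eq:lim-0}). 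For (\ref{eq:(24)}), fix $i\in\{1,\dots,m\}$ and, for each $k$, let $n(k)>k$ be the least index with $i(n(k))=i$, so that $n(k)\leq k+m$ and $y_r^{n(k)}=P_{E_i}(y_r^{n(k)-1})\in E_i\subseteq C_i$; hence $dist(y_r^k,C_i)\leq\Vert y_r^k-y_r^{n(k)}\Vert\leq\sum_{l=k}^{k+m-1}\Vert y_r^l-y_r^{l+1}\Vert\to0$ as $k\to\infty$ by (\ref{eq:lim-0}). The only delicate points are the index bookkeeping in the second paragraph and the correct use of strictness to pin down the itinerary; the remainder is the elementary Fej\'er-monotonicity estimate.
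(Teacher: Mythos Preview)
Your proof is correct and runs on the same Fej\'er-monotonicity skeleton as the paper's: the projection inequality (the paper's (\ref{eq:eq (26)})), telescoping to obtain $\sum_k\Vert y_r^k-y_r^{k+1}\Vert^2<\infty$, and then bounding $dist(y_r^k,C_i)$ by a sum of at most $m$ consecutive increments. Where you add something is your second paragraph. The paper simply invokes (\ref{eq:eq (26)}) with a citation, but that inequality needs $z$ to lie in the \emph{convex} inner set $C_{i(k+1),\tau(r,k+1)}$ actually being projected onto, not merely in the non-convex union $C_{i(k+1)}$; your induction, using the strict inequality (\ref{eq:3.5}) at $x=z$, supplies precisely this, and as a by-product also establishes $\tau(r,m+1)=r$ (hence $r\notin R$), which the paper only asserts. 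In effect you make explicit the reduction of the retained orbit to a standard cyclic projection on the fixed convex family $E_1,\dots,E_m$, a reframing the paper leaves implicit but which is what makes the Fej\'er step legitimate.
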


\begin{proof}
There exists a%
\begin{equation}
z\in C\cap C_{1,r}.
\end{equation}
By (\ref{eq:22}), by Condition \ref{cond:condition}, and by the choice
$r\notin R$ in the algorithm's iterative step (3), the sequence $y_{r}^{k}$ is
well-defined for all integers $k$. Since $y_{r}^{k+1}$ is the projection of
$y_{r}^{k},$ for each integer $k\geq0$, we have%
\begin{equation}
\Vert z-y_{r}^{k}\Vert^{2}\geq\Vert z-y_{r}^{k+1}\Vert^{2}+\Vert y_{r}%
^{k}-y_{r}^{k+1}\Vert^{2}.\label{eq:eq (26)}%
\end{equation}
Consult, e.g., Theorem 1.2.4 and Lemma 1.2.5(c) in \cite{Cegielski-book}.
Using Equation (\ref{eq:eq (26)}) it follows that for each natural number
$q>2$,%
\begin{equation}
\Vert z-y_{r}^{1}\Vert^{2}\geq\sum_{k=1}^{q-1}(\Vert z-y_{r}^{k}\Vert
^{2}-\Vert z-y_{r}^{k+1}\Vert^{2})\geq\sum_{k=1}^{q-1}\Vert y_{r}^{k}%
-y_{r}^{k+1}\Vert^{2}%
\end{equation}
which, in turn, implies that%
\begin{equation}
\Vert z-y_{r}^{1}\Vert^{2}\geq\sum_{k=1}^{\infty}\Vert y_{r}^{k}-y_{r}%
^{k+1}\Vert^{2}%
\end{equation}
so that, as claimed in (\ref{eq:lim-0}),%
\begin{equation}
\lim_{k\rightarrow\infty}\Vert y_{r}^{k}-y_{r}^{k+1}\Vert=0.
\end{equation}
Let $\varepsilon>0$. There is a natural number $p$ such that for each integer
$k\geq pm$,%
\begin{equation}
\Vert y_{r}^{k}-y_{r}^{k+1}\Vert<\varepsilon/m.
\end{equation}
Therefore, for each pair of integers $k_{1},k_{2}\geq pm$, satisfying
$|k_{1}-k_{2}|\leq m$, we have%
\begin{equation}
\Vert y_{r}^{k_{1}}-y_{r}^{k_{2}}\Vert<\varepsilon.
\end{equation}
Thus, for each integer $g\geq p$ and each $i\in\{1,2,\dots,m\}$,%
\begin{equation}
dist(y_{r}^{gm+i},C_{s})<\varepsilon,\text{\ for all }s=1,2,\dots,m.
\end{equation}
This completes the proof of the theorem.
\end{proof}

Note that in our theorem we do not require the bounded regularity property
\cite[Definition 5.1]{bb96} to hold. If one adds this assumption then the
convergence of the sequence $\{y_{r}^{k}\}_{k=1}^{\infty}$ to a point of $C$
could be shown in a quite standard manner as follows.\textbf{ }Fix $z\in C\cap
C_{1,r}$. Then our sequence of iterates is Fejér monotone with respect to $z$,
thus, it is bounded. Then, because of (\ref{eq:(24)}) and bounded regularity,
for any $\varepsilon>0$ the sequence is in an $\varepsilon$-neighborhood of
some point from $C\cap C_{1,r}$ and, therefore, it converges to a point of
$C\cap C_{1,r}$.

\begin{proposition}
\label{prop:prop2.1}Let $\Omega=\cup_{s=1}^{m_{\Omega}}\Omega_{s}$ be a UCS
set. Assume that $D\subset\mathcal{H}$ is a nonempty convex set and that for
each $x\in D$, $P_{\Omega}(x)$ is a singleton. Then there exists an $1\leq
s\leq m_{\Omega}$ such that%
\begin{equation}
P_{\Omega}(x)\in\Omega_{s},\;\text{for all \ }x\in D.
\end{equation}

\end{proposition}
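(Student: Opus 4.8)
The plan is to partition the convex domain $D$ according to which convex piece $\Omega_{s}$ contains the (unique) projection point, to show that each part of this partition is relatively closed in $D$, and then to use the connectedness of $D$ to conclude that only one part can be nonempty. First I would set, for each $s\in\{1,2,\dots,m_{\Omega}\}$,
\[
D_{s}:=\{x\in D\mid P_{\Omega}(x)\in\Omega_{s}\}.
\]
Since $P_{\Omega}(x)$ is by assumption a singleton for every $x\in D$ and its unique element lies in $\Omega=\cup_{s=1}^{m_{\Omega}}\Omega_{s}$, every $x\in D$ lies in at least one $D_{s}$; and because the $\Omega_{s}$ are pairwise disjoint by (\ref{eq:pairwise-disjoint}), it lies in exactly one. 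Hence $\{D_{s}\}_{s=1}^{m_{\Omega}}$ is a partition of $D$ into $m_{\Omega}$ (possibly empty) parts.

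The heart of the argument, and the step I expect to be the main obstacle, is to show that each $D_{s}$ is closed relative to $D$. I would take a sequence $x_{n}\in D_{s}$ with $x_{n}\to x\in D$ and argue as follows. For each $n$, the uniqueness of $P_{\Omega}(x_{n})$ together with $P_{\Omega}(x_{n})\in\Omega_{s}$ forces $P_{\Omega}(x_{n})=P_{\Omega_{s}}(x_{n})$, and therefore $dist(x_{n},\Omega)=\Vert x_{n}-P_{\Omega_{s}}(x_{n})\Vert=dist(x_{n},\Omega_{s})$. Now I would let $n\to\infty$ and use the continuity (indeed non-expansiveness; consult, e.g., \cite{Cegielski-book}) of the distance function $dist(\cdot,\Omega)$ and of the orthogonal projection $P_{\Omega_{s}}$ onto the closed convex set $\Omega_{s}$: this gives $dist(x_{n},\Omega)\to dist(x,\Omega)$ and $dist(x_{n},\Omega_{s})=\Vert x_{n}-P_{\Omega_{s}}(x_{n})\Vert\to\Vert x-P_{\Omega_{s}}(x)\Vert=dist(x,\Omega_{s})$. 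Passing to the limit in $dist(x_{n},\Omega)=dist(x_{n},\Omega_{s})$ yields $dist(x,\Omega)=dist(x,\Omega_{s})$, so $P_{\Omega_{s}}(x)\in\Omega_{s}\subset\Omega$ realizes the distance from $x$ to $\Omega$, i.e. $P_{\Omega_{s}}(x)\in P_{\Omega}(x)$. Since $P_{\Omega}(x)$ is a singleton, $P_{\Omega}(x)=\{P_{\Omega_{s}}(x)\}\subset\Omega_{s}$, hence $x\in D_{s}$, which proves relative closedness.

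Finally I would finish with the standard topological observation: $\{D_{s}\}_{s=1}^{m_{\Omega}}$ is a partition of $D$ into finitely many relatively closed sets, so each $D_{s}$ is also relatively open in $D$, its relative complement being the union of the remaining (finitely many) relatively closed $D_{t}$. Thus each $D_{s}$ is clopen in $D$. As $D$ is convex, it is connected, so exactly one $D_{s}$ equals $D$ while the others are empty; the index $s$ for which $D_{s}=D$ is the one asserted in the proposition. Apart from the continuity step establishing relative closedness of the $D_{s}$, everything reduces to the elementary fact that a connected space admits no nontrivial partition into finitely many clopen pieces.
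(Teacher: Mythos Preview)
Your proof is correct. The closedness step is exactly the point that needs care, and you handle it cleanly via the $1$-Lipschitz continuity of $dist(\cdot,\Omega)$ and of $P_{\Omega_{s}}$; the uniqueness of $P_{\Omega}(x)$ then pins the limit projection to $\Omega_{s}$ as you argue.

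The paper reaches the same conclusion by a slightly different route. Instead of partitioning all of $D$ and invoking connectedness, it assumes for contradiction that $P_{\Omega}(x)\in\Omega_{s}$ and $P_{\Omega}(y)\notin\Omega_{s}$ for some $x,y\in D$, restricts attention to the segment $\alpha\mapsto \alpha x+(1-\alpha)y$, and studies $\gamma:=\inf\{\alpha\in[0,1]\mid P_{\Omega}(\beta x+(1-\beta)y)\in\Omega_{s}\text{ for all }\beta\in[\alpha,1]\}$; closedness of $\Omega_{s}$ forces $\gamma$ into this set, while a sequence $w_{i}\uparrow\gamma$ with projections outside $\Omega_{s}$ forces the projection at $\gamma$ into the closed set $\bigcup_{\ell\neq s}\Omega_{\ell}$, a contradiction. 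This is really your connectedness argument specialized to the interval $[0,1]$ and phrased via an infimum; the continuity ingredient is the same but is left implicit in the paper. Your version has the advantage of making the continuity step explicit and of treating $D$ globally rather than one segment at a time, while the paper's version avoids any explicit appeal to topological connectedness by working entirely with real numbers.
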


\begin{proof}
Assume to the contrary that there exist $x,y\in D$ and $s\in\{1,2,\dots
,m_{\Omega}\}$ such that%
\begin{equation}
P_{\Omega}(x)\in\Omega_{s},\;P_{\Omega}(y)\notin\Omega_{s}.\label{eq:2.2}%
\end{equation}
Define%
\begin{equation}
E:=\{\alpha\in\lbrack0,1]\mid\;P_{\Omega}(\beta x+(1-\beta)y)\in\Omega
_{s}\text{ \ for all \ }\beta\in\lbrack\alpha,1]\}.
\end{equation}
Clearly, $1\in E$. Define $\gamma:=\inf(E).$ Since the set $\Omega_{s}$ is
closed we have $\gamma\in E.$ By (\ref{eq:2.2}), $0\not \in E.$ There exists a
sequence $\{w_{i}\}_{i=1}^{\infty}\subset(0,\gamma)$ such that%
\begin{equation}
\lim_{i\rightarrow\infty}w_{i}=\gamma\label{eq:2.3}%
\end{equation}
and%
\begin{equation}
P_{\Omega}(w_{i}x+(1-w_{i})y)\notin\Omega_{s}.
\end{equation}
For each integer $i\geq1$,%
\begin{equation}
P_{\Omega}(w_{i}x+(1-w_{i})y)\in\cup\{P_{\Omega_{\ell}}\mid\;\ell
\in\{1,2,\dots,m_{\Omega}\}\setminus\{s\}\},
\end{equation}
which is a closed set, thus, in view of (\ref{eq:2.3}),%
\begin{equation}
P_{\Omega}(\gamma x+(1-\gamma)y)\in\cup\{P_{\Omega_{\ell}}\mid\;\ell
\in\{1,2,\dots,m_{\Omega}\}\setminus\{s\}\},
\end{equation}
which is a contraction that proves the proposition.
\end{proof}

\begin{example}
\label{example:example}Assume that ${\mathcal{H}}$ is a two-dimensional
Euclidean space, and define
\begin{equation}
C_{1,1}:=B((0,1),1),\;C_{1,2}:=C_{1,1}+(100,1),\;C_{1,3}:=C_{1,1}+(200,1),\;
\end{equation}%
\begin{equation}
C_{1,4}:=C_{1,1}+(-100,1),\;C_{1}:=\cup_{i=1}^{4}C_{1,i},
\end{equation}
and%
\begin{equation}
C_{2,1}:=B((0,-1),1),\;C_{2,2}:=C_{2,1}+(100,-1),\;C_{2}:=C_{2,1}\cup C_{2,2}.
\end{equation}
In this example, Condition \ref{cond:condition} holds and $R=\{1,2\}$ so that
if $r=1$ we solve a convex feasibility-seeking problem with two sets
$C_{1,1},C_{2,1}$ which has a unique solution $(0,0),$ and if $j=2$ we deal
with an inconsistent convex feasibility-seeking problem with sets
$C_{1,2},C_{2,2}$ which does not have a solution.
\end{example}

\section{Conclusion\label{sect:conclusions}}

We proposed an iterative process for the solution of the feasibility-seeking
problem for unions of convex sets (UCS) sets. This problem was recently
discovered to serve as a modeling approach in fields of applications and is
part of the ongoing recent research efforts to handle non-convexity in
feasibility-seeking problems.

Our convergence analysis relies on a technical condition (Condition
\ref{cond:condition}) for whose existence we present a sufficient condition
(Proposition \ref{prop:prop2.1}). We give an example (Example
\ref{example:example}) that shows that Condition \ref{cond:condition} is not a
vacuous condition. However, to identify and/or characterize classes of
problems that satisfy the condition is another question which we are unable to
answer at this point.\bigskip

\textbf{Acknowledgments.} The work of Y.C. is supported by the ISF-NSFC joint
research plan Grant Number 2874/19, by U.S. National Institutes of Health
Grant Number R01CA266467 and by the Cooperation Program in Cancer Research of
the German Cancer Research Center (DKFZ) and Israel's Ministry of Innovation,
Science and Technology (MOST).

\end{document}